\RequirePackage{etoolbox}
\csdef{input@path}{%
 {sty/},%
 {bib/}%
}
\documentclass[numbers,compress]{vmsta2}

\volume{0}
\issue{0}
\pubyear{2026}
\articletype{research-article}

\newtheorem{theorem}{Theorem}[section]
\newtheorem{proposition}[theorem]{Proposition}
\newtheorem{lemma}[theorem]{Lemma}
\newtheorem{corollary}[theorem]{Corollary}
\theoremstyle{definition}
\newtheorem{definition}[theorem]{Definition}

\theoremstyle{remark}
\newtheorem{remark}[theorem]{Remark}

\newcommand{\R}{\mathbb R}
\newcommand{\E}{\mathbb E}
\newcommand{\Pp}{\mathbb P}
\newcommand{\e}{\mathrm e}
\newcommand{\ip}[2]{\langle #1,#2\rangle}
\newcommand{\Law}{\mathcal L}
\newcommand{\supp}{\operatorname{supp}}
\newcommand{\interior}{\operatorname{int}}
\newcommand{\cone}{\operatorname{cone}}
\newcommand{\conv}{\operatorname{conv}}

\begin{document}

\begin{frontmatter}

\pretitle{Research Article}

\title{Minimal Radial Sub-Gamma Envelopes for Infinitely Divisible Random Vectors}
\runtitle{Minimal radial sub-gamma envelopes}

\begin{aug}
\author[a]{\inits{Y.}\fnms{Yichuan}~\snm{Chen}\thanksref{cor1}\ead[label=e1]{smallboychen@gmail.com}}
\author[a]{\inits{X.}\fnms{Xin}~\snm{Wang}\ead[label=e2]{wxcaup@zstu.edu.cn}}
\thankstext[type=corresp,id=cor1]{Corresponding author.}
\address[a]{\institution{School of Art and Design, Zhejiang Sci-Tech University}, Hangzhou 310018, \cny{China}}
\runauthor{Y. Chen and X. Wang}
\end{aug}

\begin{abstract}
Let \(X\) be a centered infinitely divisible random vector with finite second moment and covariance matrix \(\Sigma\). We define the radial pole \(C_X(t)\) as the smallest scale in a right sub-gamma bound for \(\ip{t}{X}\) whose quadratic proxy is fixed at the true variance \(t^\top\Sigma t\). A canonical directional measure and an independent \(\mathrm{Beta}(1,2)\) multiplier give an exact variational formula for \(C_X\). The resulting extended-valued function is positive homogeneous and is pointwise least among all homogeneous denominators compatible with the covariance quadratic form. We prove linear-map, convolution, and L\'evy-time rules, and show that the full family of directional remainders determines the law of \(X\). Geometrically, \(C_X\) lies between the Minkowski functional of the moment-generating-function domain and one third of the positive support function of the L\'evy measure; the upper constant is sharp, and the zero set is a polar cone. The pole need not be subadditive. It is continuous on the sphere under global exponential moments and positive-definite covariance, whereas finite variance alone permits a jump from zero to infinity in nearby directions. For additive gamma-ray models, \(C_X\) equals the domain gauge and has a finite-polytope formula.
\end{abstract}

\begin{keyword}
\kwd{Infinitely divisible distribution}
\kwd{Sub-gamma inequality}
\kwd{Concentration inequality}
\kwd{L\'evy measure}
\kwd{Minkowski functional}
\end{keyword}

\begin{keyword}[class=MSC]
\kwd{60E07}
\kwd{60E15}
\kwd{60G51}
\kwd{52A20}
\end{keyword}

\end{frontmatter}

\section{Introduction}\label{sec:introduction}

A centered random variable \(Y\) is right sub-gamma with variance factor \(v>0\) and scale \(c\geq0\) if
\begin{equation}\label{eq:scalar-subgamma}
 \log\E\e^{sY}\leq \frac{v s^2}{2(1-cs)},
 \qquad 0\leq s<1/c,
\end{equation}
where \(1/0=\infty\). This form gives the Bernstein tail
\(\Pp\{Y\geq\sqrt{2vx}+cx\}\leq\e^{-x}\) and is standard in concentration theory \citep{boucheron2013}. The pair \((v,c)\) is not intrinsic: increasing either coordinate preserves the inequality. Fixing \(v=\operatorname{Var}(Y)\) removes that freedom at quadratic order and leaves a global question about the smallest admissible pole \(c\). Variance-matched scale optimization has been carried out for particular scalar families, including beta distributions \citep{skorski2023}. It is different from optimizing the variance proxy in a Poisson-type envelope \citep{leskela2026}.

For a random vector, the usual formulation replaces \(sY\) by \(\ip{t}{X}\) and controls all \(t\) with one Euclidean pair. Zhang, Cheng, and Reeves \citep{zhang2021}, for example, call \(X\in\R^d\) sub-gamma when its centered moment generating function is bounded by an expression of the form
\[
 \frac{v\|t\|^2}{2(1-b\|t\|)}.
\]
That definition is useful when an isotropic constant is needed. It does not retain the covariance quadratic form \(t^\top\Sigma t\), and a single value of \(b\) cannot record that the positive exponential-moment boundary may change sharply with direction.

Infinitely divisible vectors make this loss visible. Their projections remain infinitely divisible, but their positive jumps, Gaussian variance, and moment-generating-function domains depend on direction. General deviation inequalities for functions of infinitely divisible vectors were established by Houdr\'e \citep{houdre2002} and Paulauskas \citep{paulauskas2002}. Norm concentration under component assumptions was studied by Houdr\'e, Marchal, and Reynaud-Bouret \citep{houdre2008}, while Kontoyiannis and Madiman \citep{kontoyiannis2006} treated compound Poisson concentration. These bounds solve different problems: they do not identify the least denominator in \eqref{eq:scalar-subgamma} after the variance of every projection has been fixed.

We study that denominator as a function of the test vector. For centered \(X\in\R^d\) with covariance \(\Sigma\), let \(C_X(t)\) be the smallest scale for which \eqref{eq:scalar-subgamma} holds with \(Y=\ip{t}{X}\) and \(v=t^\top\Sigma t\). The map is allowed to take the value infinity. Its positive homogeneity makes it a radial object, but not necessarily a convex gauge. The term \emph{radial pole} will distinguish \(C_X\) from the Minkowski functional of a convex set.

The analysis begins with the canonical measure of each projection, an established representation in the theory of infinitely divisible laws \citep{sato2013,steutel2004} and a separate object of statistical estimation \citep{watteel2003}. Multiplication by an independent \(\mathrm{Beta}(1,2)\) variable turns the cumulant into the true variance times an ordinary moment generating function. This identity yields an exact variational formula for \(C_X(t)\), including directions with no positive exponential moment. It also makes the passage from scalar projections to a multivariate minimum transparent: \(C_X\) is the pointwise least positive-homogeneous denominator compatible with the covariance quadratic form.

The radial minimum has a geometric structure that is absent from a direction-by-direction statement. Its lower bound is the Minkowski functional of the interior of the moment-generating-function domain. Its upper bound is one third of the positive support function of the L\'evy measure, with the factor \(1/3\) attained by a centered Poisson law. The zero set is the polar of the closed cone generated by the jump support. We also obtain transformation and convolution rules, and an imaginary-argument version of the factorization shows that the directional remainder family determines the law. These results use standard L\'evy and convex-analytic foundations \citep{sato2013,rockafellar1970}; series representations of infinitely divisible vectors provide related geometric background \citep{rosinski1990}.

Two boundary phenomena are exact. A two-atom compound Poisson vector shows that \(C_X\) can fail subadditivity even when the covariance is positive definite. On the other hand, if the cumulant is finite on all of \(\R^d\) and \(\Sigma\) is positive definite, then \(C_X\) is continuous on the unit sphere. A finite-variance compound Poisson example shows that the stronger moment assumption cannot simply be dropped: the pole is zero in one direction and infinite along directions converging to it. Finally, for a Gaussian vector plus finitely many independent gamma rays, the pole equals the moment-domain gauge and is the support function of a finite polytope.

These strands leave a specific gap. Isotropic vector bounds provide one Euclidean denominator, while scalar optimization produces a number for a fixed law or direction. Neither formulation gives a covariance-matched homogeneous minimum over all projections or links that minimum simultaneously to the MGF domain, L\'evy jump geometry, and exact transformation laws. This is the problem addressed here.

This paper continues two scalar manuscripts in the same research series. The spectrally positive problem came first \citep{chen2026spectral}, followed by the general two-sided problem \citep{chenwang2026directional}. Passing to vectors introduces the homogeneous minimum, domain and support geometry, nonconvexity, the regularity boundary, and the gamma-ray formula. We include the directional factorization needed below so that the argument is complete.

Section~\ref{sec:factorization} constructs the pole and proves radial minimality. Section~\ref{sec:geometry} develops its structural and geometric properties. Section~\ref{sec:regularity} gives the non-subadditivity and regularity results. Section~\ref{sec:gamma} treats gamma-ray models, and Section~\ref{sec:discussion} closes with the implications and remaining questions.

\section{Directional factorization and the minimal radial envelope}\label{sec:factorization}

Let \(X\) be a centered infinitely divisible random vector in \(\R^d\) with finite second moment. Write \(A\) for the covariance matrix of its Gaussian component and \(\nu\) for its L\'evy measure. With the drift chosen to center \(X\), the cumulant is
\begin{equation}\label{eq:LK}
 K_X(t):=\log\E\e^{\ip{t}{X}}
 =\frac12t^\top A t+
 \int_{\R^d}\bigl(\e^{\ip{t}{x}}-1-\ip{t}{x}\bigr)\,\nu(dx),
\end{equation}
at every point where the positive part is integrable. Its covariance matrix is
\begin{equation}\label{eq:covariance}
 \Sigma=A+\int_{\R^d}xx^\top\,\nu(dx),
 \qquad V_X(t):=t^\top\Sigma t.
\end{equation}
The integral form in \eqref{eq:LK} is legitimate under the second-moment assumption; see, for example, Sato \citep{sato2013}.

Fix \(t\in\R^d\) with \(V_X(t)>0\). Define a finite measure \(H_t\) on \(\R\) by
\begin{equation}\label{eq:canonical-measure}
 H_t(B)=(t^\top A t)\mathbf 1_{\{0\in B\}}
 +\int_{\R^d}\mathbf 1_{\{\ip{t}{x}\in B\}}\ip{t}{x}^2\,\nu(dx).
\end{equation}
Its total mass is \(V_X(t)\). Let \(Y_t\) have distribution \(H_t/V_X(t)\), let \(B\) be independent of \(Y_t\) with density \(2(1-b)\mathbf 1_{(0,1)}(b)\), and set
\begin{equation}\label{eq:remainder}
 R_t=BY_t.
\end{equation}
The variable \(R_t\) may have either sign.

\begin{lemma}[Beta identity]\label{lem:beta}
For every real or purely imaginary \(z\), with the value at zero understood by continuity,
\begin{equation}\label{eq:beta-identity}
 \E\e^{zB}=\frac{2(\e^z-1-z)}{z^2}.
\end{equation}
\end{lemma}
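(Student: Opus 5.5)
The plan is a direct computation of the Laplace transform of the density \(2(1-b)\mathbf 1_{(0,1)}(b)\), carried out for a general complex argument \(z\). Since \(B\) is bounded, \(z\mapsto\E\e^{zB}=\int_0^1 2(1-b)\e^{zb}\,db\) is an entire function of \(z\). The right-hand side of \eqref{eq:beta-identity} is also entire once we assign it the value \(1\) at \(z=0\): the numerator \(\e^z-1-z\) has a zero of order two at the origin, and its power series is \(\sum_{k\ge2}z^k/k!\). So it suffices to prove the identity for \(z\neq0\) in \(\mathbb C\). The real and purely imaginary cases are then special cases, and the convention at zero is automatic.

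For \(z\neq0\), integrate by parts once, taking \(u=1-b\) and \(dv=\e^{zb}\,db\):
\[
 \int_0^1(1-b)\e^{zb}\,db=\Bigl[\tfrac{(1-b)\e^{zb}}{z}\Bigr]_0^1+\frac1z\int_0^1\e^{zb}\,db=-\frac1z+\frac{\e^z-1}{z^2}=\frac{\e^z-1-z}{z^2}.
\]
Multiplying by \(2\) gives \eqref{eq:beta-identity}.

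As a cross-check, or as an alternative route that handles \(z=0\) with no limiting argument, expand the exponential termwise, which is justified by dominated convergence because \(B\in(0,1)\). The moments of \(B\) are Beta\((1,2)\) moments,
\[
 \E B^k=\frac{2}{(k+1)(k+2)}=\frac{2\,k!}{(k+2)!}.
\]
Hence
\[
 \E\e^{zB}=\sum_{k\ge0}\frac{2z^k}{(k+2)!}=\frac{2}{z^2}\sum_{m\ge2}\frac{z^m}{m!},
\]
which is the claimed expression, with value \(1\) at \(z=0\).

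There is no real obstacle here. The only point needing care is the meaning of the formula at \(z=0\), and analyticity (or equivalently the power series) settles it. I would present the series argument as the main proof, since it covers all complex \(z\) at once, including the imaginary-argument version used later for the uniqueness result.
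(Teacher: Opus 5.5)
Your proof is correct and matches the paper's, which evaluates \(2\int_0^1(1-b)\e^{zb}\,db\) directly and notes that the right-hand side tends to one at \(z=0\); your integration by parts is that same computation. The power-series version via the moments \(\E B^k=2/((k+1)(k+2))\) is a harmless variant that settles \(z=0\) and all complex \(z\) at once.
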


\begin{proof}
Direct integration gives
\[
 2\int_0^1(1-b)\e^{zb}\,db
 =\frac{2(\e^z-1-z)}{z^2}.
\]
The right-hand side tends to one as \(z\to0\).
\end{proof}

\begin{theorem}[Canonical directional factorization]\label{thm:factorization}
For \(s\geq0\), with extended values allowed,
\begin{equation}\label{eq:factorization}
 K_X(st)=\frac{V_X(t)s^2}{2}M_t(s),
 \qquad M_t(s):=\E\e^{sR_t}.
\end{equation}
For \(r\in\R\), the characteristic exponent satisfies
\begin{equation}\label{eq:imaginary-factorization}
 \Psi_X(rt):=\log\E\e^{ir\ip{t}{X}}
 =-\frac{V_X(t)r^2}{2}\,\varphi_t(r),
 \qquad \varphi_t(r):=\E\e^{irR_t}.
\end{equation}
\end{theorem}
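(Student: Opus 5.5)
The plan is to compute \(\E\e^{sR_t}\) by conditioning on \(Y_t\), apply Lemma~\ref{lem:beta} with \(z=sY_t\), and recognize the result as the Lévy--Khintchine integrand of \eqref{eq:LK} reweighted by \(H_t\). Since \(B\) is independent of \(Y_t\), Tonelli (for \(s\ge0\) the integrands are nonnegative) gives
\[
 M_t(s)=\E\bigl[\E(\e^{sBY_t}\mid Y_t)\bigr]
 =\frac{1}{V_X(t)}\int_{\R}\frac{2(\e^{sy}-1-sy)}{s^2y^2}\,H_t(dy),
\]
where the integrand equals \(1\) at \(y=0\), by the continuity convention in Lemma~\ref{lem:beta}. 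The function \(u\mapsto \e^u-1-u\) is nonnegative, so every expression here is a nonnegative extended real and no integrability hypothesis is needed at this stage.

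Next I unpack \(H_t\) from \eqref{eq:canonical-measure}. The atom at \(0\) of mass \(t^\top At\) contributes \(t^\top At\) to the integral. The jump part is the image of \(\ip{t}{x}^2\nu(dx)\) under \(x\mapsto\ip{t}{x}\), so by change of variables it contributes
\[
 \int_{\R^d}\frac{2(\e^{s\ip{t}{x}}-1-s\ip{t}{x})}{s^2\ip{t}{x}^2}\,\ip{t}{x}^2\,\nu(dx).
\]
On \(\{\ip{t}{x}=0\}\) the integrand is \(1\cdot 0=0\), consistent with the corresponding Lévy integrand \(\e^{0}-1-0=0\). The squares cancel. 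Multiplying by \(V_X(t)s^2/2\) then gives
\[
 \tfrac12 s^2t^\top At+\int_{\R^d}\bigl(\e^{\ip{st}{x}}-1-\ip{st}{x}\bigr)\,\nu(dx)=K_X(st).
\]
This identity holds in \([0,\infty]\), and both sides are infinite simultaneously. I must check that \eqref{eq:LK} is read as this nonnegative-integrand expression whenever the positive part is integrable. The centering term is \(\nu\)-integrable because \(\int|x|^2\,d\nu<\infty\) and \(X\) is centered, so finiteness of \(\E\e^{\ip{st}{X}}\) is equivalent to finiteness of the integral. The case \(s=0\) is trivial: both sides vanish, since \(M_t(0)=1\).

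For the imaginary version I repeat the same computation with \(z=irY_t\). Here the modulus bound \(|\e^{iu}-1-iu|\le u^2/2\) shows that the integrand \(2(\e^{iry}-1-iry)/(r^2y^2)\) is bounded by \(1\). So Fubini applies, and \(\varphi_t(r)=\E\e^{irR_t}\) is finite for all \(r\). The analogous cancellation yields
\[
 \frac{V_X(t)(ir)^2}{2}\,\varphi_t(r)
 =-\tfrac12r^2t^\top At+\int\bigl(\e^{ir\ip{t}{x}}-1-ir\ip{t}{x}\bigr)\,\nu(dx),
\]
which is the centered Lévy--Khintchine exponent \(\Psi_X(rt)\). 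The one point needing care is the identification of \(\Psi_X\) with this centered form, where the usual truncation \(\mathbf 1_{\{|x|\le1\}}\) has been absorbed into the drift. This is legitimate by the second-moment assumption, exactly as cited for \eqref{eq:LK} from Sato.

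The main obstacle is bookkeeping rather than analysis. It consists of justifying the extended-value equality, namely that \(K_X(st)=\infty\) exactly when the integral diverges, which uses nonnegativity plus the integrability of the linear term. It also requires handling the \(0/0\) conventions at \(y=0\) and at \(\ip{t}{x}=0\) consistently.
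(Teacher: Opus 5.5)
Your proposal is correct and is essentially the paper's proof. It applies the Beta identity inside the \(H_t\)-integral, cancels \(\ip{t}{x}^2\), multiplies by \(V_X(t)s^2/2\), and repeats the computation with \(s=ir\); your Tonelli/Fubini remarks make explicit what the paper leaves implicit. One wording point: \(\ip{t}{x}\) alone need not be \(\nu\)-integrable near the origin, so justify the extended-value equivalence by Sato's criterion \(\E\e^{\ip{\theta}{X}}<\infty\iff\int_{|x|>1}\e^{\ip{\theta}{x}}\,\nu(dx)<\infty\), together with \(\e^u-1-u=O(u^2)\) near zero and the integrability of \(1\) and \(\ip{\theta}{x}\) on \(\{|x|>1\}\).
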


\begin{proof}
Put \(y=\ip{t}{x}\). Lemma~\ref{lem:beta} and \eqref{eq:canonical-measure} give
\begin{align*}
 M_t(s)
 &=\frac{1}{V_X(t)}\int_{\R}\E\e^{sBy}\,H_t(dy)\\
 &=\frac{1}{V_X(t)}\left[t^\top A t+
 \frac{2}{s^2}\int_{\R^d}(\e^{s\ip{t}{x}}-1-s\ip{t}{x})\,\nu(dx)\right].
\end{align*}
Multiplication by \(V_X(t)s^2/2\) proves \eqref{eq:factorization}. The same calculation with \(s=ir\) is absolutely justified by the second-moment form of the L\'evy--Khintchine formula and gives \eqref{eq:imaginary-factorization}.
\end{proof}

We now define the scale without assuming that a positive exponential moment exists.

\begin{definition}[Directional pole]\label{def:pole}
If \(V_X(t)>0\), let \(C_X(t)\in[0,\infty]\) be the infimum of all \(c\geq0\) such that
\begin{equation}\label{eq:directional-envelope}
 K_X(st)\leq \frac{V_X(t)s^2}{2(1-cs)},
 \qquad 0\leq s<1/c.
\end{equation}
The infimum of the empty set is infinity. If \(V_X(t)=0\), set \(C_X(t)=0\).
\end{definition}

When \(V_X(t)=0\), the centered projection \(\ip{t}{X}\) is zero almost surely, so the convention is exact.

\begin{theorem}[Exact variational formula]\label{thm:variational}
For \(V_X(t)>0\), extend \(M_t(s)\) by infinity whenever \(K_X(st)=\infty\), and use \(1/\infty=0\). Then
\begin{equation}\label{eq:variational}
 C_X(t)=\max\left\{0,\sup_{s>0}
 \frac{1-M_t(s)^{-1}}{s}\right\}.
\end{equation}
Moreover:
\begin{enumerate}
\item \(C_X(t)<\infty\) if and only if \(K_X(st)<\infty\) for some \(s>0\);
\item \(C_X(t)=0\) if and only if
\begin{equation}\label{eq:no-positive-jumps}
 \nu\{x:\ip{t}{x}>0\}=0;
\end{equation}
\item if \(C_X(t)<\infty\) and
\(\int|\ip{t}{x}|^3\,\nu(dx)<\infty\), then
\begin{equation}\label{eq:third-cumulant-bound}
 C_X(t)\geq \max\left\{0,\frac{\kappa_3(t)}{3V_X(t)}\right\},
 \qquad \kappa_3(t):=\int\ip{t}{x}^3\,\nu(dx).
\end{equation}
\end{enumerate}
\end{theorem}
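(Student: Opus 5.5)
Having Theorem~\ref{thm:factorization}, I would first rewrite the envelope condition \eqref{eq:directional-envelope} in terms of \(M_t\). For \(0<s<1/c\), dividing by \(V_X(t)s^2/2>0\) shows that \eqref{eq:directional-envelope} is equivalent to \(M_t(s)\le 1/(1-cs)\), that is, \(1-M_t(s)^{-1}\le cs\). Here \(M_t(s)\) is allowed to be \(+\infty\) with \(1/\infty=0\). The value \(s=0\) is trivial. Hence \(c\) is admissible exactly when
\[
c\ge g(s):=\frac{1-M_t(s)^{-1}}{s}\qquad\text{for all } s\in(0,1/c).
\]
The main obstacle is that the range of \(s\) depends on \(c\). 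I would remove this dependence by noting that \(g(s)<1/s\) always, since \(M_t(s)^{-1}>0\) whenever \(M_t(s)<\infty\). When \(M_t(s)=\infty\) we have \(g(s)=1/s\). So for \(s\ge 1/c\), finite \(M_t(s)\) gives \(g(s)<1/s\le c\), and infinite \(M_t(s)\) gives \(g(s)=1/s\le c\). In either case the constraint is automatic for \(s\ge1/c\). Therefore \(c\) is admissible if and only if \(c\ge\sup_{s>0}g(s)\), and since \(c\ge 0\) is also required, this yields \eqref{eq:variational}. The case \(c=0\) is covered by the same reading, with range \(s\in(0,\infty)\).

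For item 1, I would use that \(M_t\) is convex and finite on an interval \([0,s_0)\) or \([0,s_0]\), because it is an MGF and \(R_t\) has finite moments near \(0\). If \(K_X(st)<\infty\) for some \(s_1>0\), then \(g\) is bounded on \([s_1,\infty)\) by \(1/s_1\). Near \(0\), \(g\) is bounded because \(M_t(s)=1+s\E R_t+o(s)\) is differentiable from the right at \(0\). Boundedness of \(g\) on the intermediate compact interval follows from continuity of \(M_t\) there. Hence the supremum is finite. Conversely, if \(M_t(s)=\infty\) for all \(s>0\), then \(g(s)=1/s\to\infty\) as \(s\downarrow0\), so \(C_X(t)=\infty\).

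For item 2, note that \(C_X(t)=0\) iff \(M_t(s)\le1\) for all \(s>0\). If \eqref{eq:no-positive-jumps} holds, then \(H_t\) is supported on \((-\infty,0]\), so \(R_t\le0\) a.s. and \(M_t\le1\). Conversely, suppose \(\nu\{\ip{t}{x}>0\}>0\). Then \(\Pp(R_t>0)>0\). Either \(M_t\) is infinite somewhere, which gives \(C_X(t)>0\), or \(M_t(s)\to\infty\) as \(s\) grows. In the latter case \(M_t(s)>1\) for some \(s\) and hence \(g(s)>0\).

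For item 3, the third-moment hypothesis makes \(\E R_t\) finite, and \(C_X(t)<\infty\) makes \(M_t\) finite near zero. Then \(g(s)\to \E R_t\) as \(s\downarrow0\), because \(1-M_t(s)^{-1}=s\E R_t+o(s)\). Finally, \(\E B=1/3\) and \(\E Y_t=\kappa_3(t)/V_X(t)\), since the Gaussian atom sits at \(0\). This gives \(C_X(t)\ge\kappa_3(t)/(3V_X(t))\), which together with \(C_X(t)\ge0\) is \eqref{eq:third-cumulant-bound}.
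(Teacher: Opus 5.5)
Your proofs of the variational formula, item 2 and item 3 are correct and follow essentially the paper's route. The one flaw is the near-zero step in the forward direction of item 1. There you bound \(g\) near \(0\) through the expansion \(M_t(s)=1+s\E R_t+o(s)\). That expansion needs \(\E|R_t|<\infty\), i.e.\ \(\int|\ip{t}{x}|^3\,\nu(dx)<\infty\). This is exactly the extra hypothesis of item 3, which you correctly invoke there, and it is not available in item 1.

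With finite variance only, the negative projected jumps may have an infinite third moment. Take, for example, a projected L\'evy density \(|y|^{-4}\) on \((-\infty,-1)\). Then \(\E R_t=-\infty\), \(M_t\) has no finite right derivative at \(0\), and in fact \(g(s)\to-\infty\) as \(s\downarrow0\). So the claim that \(g\) is bounded near \(0\) is false as stated, and the justification you give does not apply.

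The conclusion survives, because \(C_X(t)\) depends only on \(\sup g\), so an upper bound suffices. That bound follows from the convexity you already mention. \(M_t\) is convex and finite on \([0,s_1]\) with \(M_t(0)=1\), so \((M_t(s)-1)/s\) is nondecreasing there. Hence for \(0<s\le s_1\), either \(M_t(s)<1\) and \(g(s)<0\), or
\[
 g(s)=\frac{M_t(s)-1}{sM_t(s)}\le\frac{M_t(s)-1}{s}\le\frac{M_t(s_1)-1}{s_1}.
\]
Together with \(g\le 1/s_1\) on \([s_1,\infty)\), this gives \(\sup g<\infty\), and the continuity argument on an intermediate interval becomes unnecessary.

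The paper obtains the same one-sided control differently. It shows \(K_X(st)-V_X(t)s^2/2\le Ls^3\) using only the positive jumps, since \(\e^z-1-z-z^2/2\le0\) for \(z\le0\). It then concludes only that the positive part of \((1-M_t(s)^{-1})/s\) is bounded near zero.
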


\begin{proof}
Set
\[
 \gamma=\max\left\{0,\sup_{s>0}\frac{1-M_t(s)^{-1}}s\right\}.
\]
If \(s<1/\gamma\), then \(M_t(s)\) must be finite. Otherwise the expression in the supremum equals \(1/s>\gamma\). Since
\[
 \frac{1-M_t(s)^{-1}}s\leq\gamma,
\]
we have \(M_t(s)\leq(1-\gamma s)^{-1}\). The factorization proves that \(\gamma\) is feasible in \eqref{eq:directional-envelope}. Conversely, if \(c<\gamma\), some \(s>0\) satisfies
\((1-M_t(s)^{-1})/s>c\). Because the left-hand side is at most \(1/s\), this point obeys \(s<1/c\), and the envelope with scale \(c\) fails there. This proves \eqref{eq:variational}, including the case \(\gamma=\infty\).

Any finite feasible scale makes \(K_X(st)\) finite for all sufficiently small \(s>0\). For the converse, suppose \(K_X(s_0t)<\infty\). For \(z\geq0\),
\[
 \e^z-1-z-\frac{z^2}{2}\leq\frac{z^3\e^z}{6},
\]
whereas the left-hand side is nonpositive for \(z\leq0\). It follows that, uniformly for \(0<s\leq s_0/2\),
\[
 K_X(st)-\frac{V_X(t)s^2}{2}\leq Ls^3
\]
for some finite \(L\). Indeed, the positive-jump integral is controlled by the second moment near zero and by \(\e^{s_0\ip{t}{x}}\) on large positive jumps. Hence the positive part of
\((1-M_t(s)^{-1})/s\) is bounded near zero. Away from zero it is bounded by \(1/s\). Formula \eqref{eq:variational} is therefore finite, proving part 1.

If \eqref{eq:no-positive-jumps} holds, then
\(\e^z-1-z\leq z^2/2\) for every \(z\leq0\), and \eqref{eq:LK} gives
\(K_X(st)\leq V_X(t)s^2/2\) for all \(s\geq0\). Thus \(C_X(t)=0\). Conversely, suppose the projected L\'evy measure charges \((0,\infty)\). There are \(0<a<b<\infty\) for which
\[
 \delta:=\nu\{x:a\leq\ip{t}{x}\leq b\}>0.
\]
If a positive MGF is infinite, the variational formula already gives a positive pole. Otherwise, because \(\e^z-1-z\geq0\) for every real \(z\),
\[
 K_X(st)\geq\delta\bigl(\e^{sa}-1-sb\bigr).
\]
The right-hand side eventually exceeds \(V_X(t)s^2/2\), so \(M_t(s)>1\) for some \(s>0\). Formula \eqref{eq:variational} is then strictly positive. This proves part 2.

Under the assumptions of part 3, expansion at the origin gives
\[
 K_X(st)=\frac{V_X(t)s^2}{2}+\frac{\kappa_3(t)s^3}{6}+o(s^3),
 \qquad
 M_t(s)=1+\frac{\kappa_3(t)}{3V_X(t)}s+o(s).
\]
Taking \(s\downarrow0\) in \eqref{eq:variational} yields \eqref{eq:third-cumulant-bound}.
\end{proof}

\begin{remark}\label{rem:opposite-direction}
The left scale of \(\ip{t}{X}\) is \(C_X(-t)\). No symmetry is imposed, and no nonnegative-remainder order is used. Part 2 of Theorem~\ref{thm:variational} shows that a vanishing pole has an exact spectral meaning: there are no L\'evy jumps toward the relevant half-space.
\end{remark}

\begin{proposition}[Positive homogeneity]\label{prop:homogeneity}
For every \(a\geq0\) and \(t\in\R^d\),
\begin{equation}\label{eq:homogeneity}
 C_X(at)=aC_X(t).
\end{equation}
\end{proposition}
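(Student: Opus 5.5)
The plan is to work directly from Definition~\ref{def:pole} by rescaling the variable \(s\), and to treat the degenerate cases separately. If \(a=0\), then \(V_X(0)=0\), so \(C_X(0)=0=0\cdot C_X(t)\) under the usual convention \(0\cdot\infty=0\). I will state this convention explicitly, since \(C_X(t)\) may be infinite. If \(a>0\) and \(V_X(t)=0\), then \(V_X(at)=a^2V_X(t)=0\), so both sides vanish.

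The main case is \(a>0\) and \(V_X(t)>0\). Here \(V_X(at)=a^2V_X(t)>0\) and \(K_X(s\,at)=K_X((as)t)\). For \(c\geq0\), I claim that \(c\) is feasible for \(t\) in \eqref{eq:directional-envelope} if and only if \(ac\) is feasible for \(at\). To see this, substitute \(u=as\). Then
\[
 K_X(s\,at)=K_X(ut)\quad\text{and}\quad \frac{V_X(at)s^2}{2(1-acs)}=\frac{a^2V_X(t)(u/a)^2}{2(1-cu)}=\frac{V_X(t)u^2}{2(1-cu)}.
\]
The range \(0\leq s<1/(ac)\) corresponds exactly to \(0\leq u<1/c\), with the convention \(1/0=\infty\) covering \(c=0\). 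So the map \(c\mapsto ac\) is a bijection from the feasible set for \(t\) onto the feasible set for \(at\). Taking infima gives \(C_X(at)=aC_X(t)\). This includes the case where both feasible sets are empty, in which both sides equal \(\infty\).

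As a cross-check, the same conclusion should follow from Theorem~\ref{thm:variational}. The distribution of \(Y_{at}\) is the law of \(aY_t\), because \(H_{at}(B)=a^2H_t(B/a)\) and the total mass scales by \(a^2\). Hence \(M_{at}(s)=M_t(as)\), and the supremum in \eqref{eq:variational} rescales by the factor \(a\).

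There is no substantive obstacle. The only point needing care is the bookkeeping of extended values: the cases \(c=0\) and \(C_X(t)=\infty\), and the product \(0\cdot\infty\) when \(a=0\).
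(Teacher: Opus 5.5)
Your proof is correct and is the paper's argument: substituting \(u=as\) and using \(V_X(at)=a^2V_X(t)\) shows that \(c\) is feasible for \(t\) exactly when \(ac\) is feasible for \(at\), and taking infima gives the identity. Your separate treatment of \(V_X(t)=0\) and your explicit convention \(0\cdot\infty=0\) at \(a=0\) just spell out what the paper calls immediate.
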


\begin{proof}
The case \(a=0\) is immediate. For \(a>0\), replace \(s\) by \(as\) in \eqref{eq:directional-envelope}, noting that \(V_X(at)=a^2V_X(t)\). A scale \(c\) is feasible for \(t\) exactly when \(ac\) is feasible for \(at\).
\end{proof}

Positive homogeneity allows the scalar minima to be characterized without choosing coordinates or a norm.

\begin{definition}[Admissible radial denominator]\label{def:admissible}
A function \(q:\R^d\to[0,\infty]\) is admissible for \(X\) if it is positively homogeneous and
\begin{equation}\label{eq:radial-envelope}
 K_X(t)\leq\frac{V_X(t)}{2(1-q(t))}
 \qquad\text{whenever }q(t)<1.
\end{equation}
\end{definition}

\begin{theorem}[Pointwise minimal radial envelope]\label{thm:minimal-radial}
The function \(C_X\) is admissible. If \(q\) is any admissible radial denominator, then
\begin{equation}\label{eq:pointwise-minimum}
 q(t)\geq C_X(t),\qquad t\in\R^d.
\end{equation}
Thus \(C_X\) is the pointwise least homogeneous pole compatible with the covariance quadratic form.
\end{theorem}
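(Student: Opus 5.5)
The plan is to prove both halves of Theorem~\ref{thm:minimal-radial} by applying Definition~\ref{def:pole} with \(s=1\) and then rescaling through positive homogeneity (Proposition~\ref{prop:homogeneity}); no new analysis is needed beyond the variational description in Theorem~\ref{thm:variational}.

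\emph{Admissibility.} Proposition~\ref{prop:homogeneity} already gives positive homogeneity of \(C_X\). Fix \(t\) with \(C_X(t)<1\). If \(V_X(t)=0\), then \(\ip{t}{X}=0\) almost surely, so \(K_X(t)=0\) and \eqref{eq:radial-envelope} holds trivially. If \(V_X(t)>0\), I must show that the infimum in Definition~\ref{def:pole} is attained, so that \(c=C_X(t)\) itself satisfies \eqref{eq:directional-envelope}. The proof of Theorem~\ref{thm:variational} shows exactly this: \(\gamma=C_X(t)\) is feasible, meaning \(M_t(s)\le(1-\gamma s)^{-1}\) for all \(0\le s<1/\gamma\). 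Since \(C_X(t)<1\), the value \(s=1\) lies in the admissible range. Together with the factorization \eqref{eq:factorization}, this gives \(K_X(t)\le V_X(t)/(2(1-C_X(t)))\).

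\emph{Minimality.} Let \(q\) be admissible and fix \(t\). If \(q(t)=\infty\), there is nothing to prove. If \(V_X(t)=0\), then \(C_X(t)=0\le q(t)\). Otherwise put \(c=q(t)<\infty\). For any \(s\ge0\) with \(cs<1\), homogeneity gives \(q(st)=cs<1\), so admissibility at \(st\) yields
\[
 K_X(st)\le\frac{V_X(st)}{2(1-q(st))}=\frac{V_X(t)s^2}{2(1-cs)}.
\]
Here I use \(V_X(st)=s^2V_X(t)\). The case \(s=0\) is trivial because both sides vanish. Hence \(c\) is feasible in \eqref{eq:directional-envelope}, and \(C_X(t)\le c=q(t)\).

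The only delicate point is attainment of the infimum in the admissibility step, including the boundary case \(C_X(t)=0\), where the envelope must hold for every \(s\ge0\). I will rely on the first paragraph of the proof of Theorem~\ref{thm:variational}, which proves feasibility of \(\gamma\) directly rather than only as a limit, and I will note explicitly that it covers \(\gamma=0\).
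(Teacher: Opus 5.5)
Your proof is correct and follows the paper's argument. Admissibility comes from setting \(s=1\) in the directional envelope, and minimality from homogeneity together with admissibility applied at \(st\). You are also more explicit than the paper on three points it leaves implicit: the infimum is attained (the feasibility of \(\gamma\), including \(\gamma=0\), in the proof of Theorem~\ref{thm:variational}), the case \(V_X(t)=0\), and the case \(q(t)=\infty\).
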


\begin{proof}
Positive homogeneity follows from Proposition~\ref{prop:homogeneity}. If \(C_X(t)<1\), put \(s=1\) in the defining directional envelope to obtain \eqref{eq:radial-envelope} with \(q=C_X\).

Now let \(q\) be admissible and fix \(t\). For every \(0\leq s<1/q(t)\), homogeneity gives \(q(st)=sq(t)<1\). Applying \eqref{eq:radial-envelope} at \(st\) yields
\[
 K_X(st)\leq\frac{s^2V_X(t)}{2(1-sq(t))}.
\]
Hence \(q(t)\) is a feasible directional scale, so it cannot be smaller than \(C_X(t)\).
\end{proof}

\begin{corollary}[Directional tails]\label{cor:tail}
If \(C_X(t)<\infty\), then for every \(x\geq0\),
\begin{equation}\label{eq:tail}
 \Pp\left\{\ip{t}{X}\geq
 \sqrt{2V_X(t)x}+C_X(t)x\right\}\leq\e^{-x}.
\end{equation}
\end{corollary}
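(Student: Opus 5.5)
The tail bound \eqref{eq:tail} follows from the directional envelope \eqref{eq:directional-envelope} by the standard Chernoff argument for right sub-gamma variables. I will first dispose of two degenerate cases. If \(V_X(t)=0\), then \(\ip{t}{X}=0\) almost surely, as noted after Definition~\ref{def:pole}, and the event in \eqref{eq:tail} has probability zero for \(x>0\) and at most one for \(x=0\). If \(V_X(t)>0\) and \(x=0\), the bound is trivial. So I will assume \(V_X(t)>0\), \(x>0\), and write \(v=V_X(t)\) and \(c=C_X(t)<\infty\).

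The first step is to check that the infimum in Definition~\ref{def:pole} is attained, so that \(K_X(st)\le vs^2/(2(1-cs))\) holds for all \(0\le s<1/c\). I will read this off the variational formula \eqref{eq:variational}. The proof of Theorem~\ref{thm:variational} shows that the value \(\gamma\) given by that formula is itself feasible. Alternatively, for each fixed \(s<1/c\) the envelope holds with every \(c'\in(c,1/s)\), and letting \(c'\downarrow c\) gives the inequality at \(c\).

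Next comes the Chernoff bound. For \(0\le s<1/c\) and any threshold \(u\), Markov's inequality gives
\[
 \Pp\{\ip{t}{X}\ge u\}\le \exp\bigl(K_X(st)-su\bigr)\le \exp\Bigl(\frac{vs^2}{2(1-cs)}-su\Bigr).
\]
Optimizing over \(s\) yields \(\exp(-h(u))\), where
\[
 h(u)=\sup_{0\le s<1/c}\Bigl(su-\frac{vs^2}{2(1-cs)}\Bigr).
\]
The remaining step, and the only computational one, is to show that \(h(\sqrt{2vx}+cx)\ge x\). This is the classical sub-gamma computation (see Boucheron et al.). I will avoid computing the Legendre transform exactly and instead use the explicit choice
\[
 s=\frac{\sqrt{2x/v}}{1+c\sqrt{2x/v}}=\frac{\sqrt{2x}}{\sqrt v+c\sqrt{2x}},
\]
which satisfies \(cs<1\). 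With \(a=\sqrt{2x/v}\) this gives \(1-cs=1/(1+ca)\) and \(s=a/(1+ca)\). Then
\[
 \frac{vs^2}{2(1-cs)}=\frac{va^2}{2(1+ca)}=\frac{x}{1+ca},
\]
and
\[
 su=\frac{a}{1+ca}\bigl(\sqrt{2vx}+cx\bigr)=\frac{2x+cax}{1+ca}.
\]
Their difference is \(x(1+ca)/(1+ca)=x\), which proves \eqref{eq:tail}. The case \(c=0\) is covered by the same formula with \(s=a\).

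The only points that need care are the attainment of the infimum and the extended-value conventions; the algebra itself is routine.
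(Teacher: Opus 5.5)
Your argument for \(V_X(t)>0\) is the same as the paper's: exponential Markov with exactly the choice \(s=a/(1+ca)\), \(a=\sqrt{2x/v}\), and your algebra is correct. You are also more careful than the paper about using the envelope at \(c=C_X(t)\) itself. The feasible set is upward closed, so letting \(c'\downarrow c\), or citing the feasibility of \(\gamma\) from the proof of Theorem~\ref{thm:variational}, justifies a step the paper takes silently.

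The step that fails is the degenerate case. When \(V_X(t)=0\), Definition~\ref{def:pole} sets \(C_X(t)=0\). The threshold \(\sqrt{2V_X(t)x}+C_X(t)x\) is then \(0\) for every \(x\), and the event in \eqref{eq:tail} is \(\{\ip{t}{X}\geq0\}\). Because \(\ip{t}{X}=0\) almost surely, this event has probability one, not zero. The bound \(1\leq\e^{-x}\) is false for every \(x>0\), and already \(t=0\) is a counterexample. So \eqref{eq:tail} as literally stated cannot be proved in this case. The paper's proof dismisses it with the same remark that the projection vanishes almost surely, and it has the same defect. The repair is either to assume \(V_X(t)>0\), which is all your Chernoff computation uses, or to replace \(\geq\) by \(>\) inside the probability. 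The strict event is empty in the degenerate case and no larger than the original event otherwise, so your argument then covers every \(t\) with \(C_X(t)<\infty\).
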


\begin{proof}
For \(V_X(t)>0\), use the exponential Markov inequality with
\[
 s=\frac{\sqrt{2x/V_X(t)}}{1+C_X(t)\sqrt{2x/V_X(t)}}.
\]
The exponent given by \eqref{eq:directional-envelope} is at most \(-x\). If \(V_X(t)=0\), the projection is zero almost surely.
\end{proof}

\begin{corollary}[Best Euclidean pole]\label{cor:euclidean}
Let
\begin{equation}\label{eq:global-pole}
 \overline C_X=\sup_{\|u\|=1}C_X(u).
\end{equation}
Then
\begin{equation}\label{eq:covariance-isotropic}
 K_X(t)\leq\frac{t^\top\Sigma t}{2(1-\overline C_X\|t\|)},
 \qquad \overline C_X\|t\|<1.
\end{equation}
Moreover, \(\overline C_X\) is the smallest constant that can replace it in \eqref{eq:covariance-isotropic}.
\end{corollary}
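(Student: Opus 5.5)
The plan is to reduce Corollary~\ref{cor:euclidean} to Theorem~\ref{thm:minimal-radial} by comparing the Euclidean denominator \(q(t)=\overline C_X\|t\|\) with \(C_X\).

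\emph{Validity of \eqref{eq:covariance-isotropic}.} If \(\overline C_X=\infty\), the condition \(\overline C_X\|t\|<1\) holds only at \(t=0\), where both sides vanish. So assume \(\overline C_X<\infty\). Fix \(t\neq0\) with \(\overline C_X\|t\|<1\) and write \(t=\|t\|u\) with \(\|u\|=1\). By Proposition~\ref{prop:homogeneity},
\[
 C_X(t)=\|t\|C_X(u)\leq\overline C_X\|t\|<1 .
\]
If \(V_X(t)=0\), the projection \(\ip{t}{X}\) vanishes almost surely, so \(K_X(t)=0\) and the bound is trivial. Otherwise, admissibility of \(C_X\) (Theorem~\ref{thm:minimal-radial}) gives
\[
 K_X(t)\leq \frac{V_X(t)}{2(1-C_X(t))}.
\]
The map \(c\mapsto V_X(t)/(2(1-c))\) is increasing on \([0,1)\), so we may replace \(C_X(t)\) by the larger value \(\overline C_X\|t\|\). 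This proves \eqref{eq:covariance-isotropic}.

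\emph{Optimality.} Suppose a constant \(b\geq0\) also makes \eqref{eq:covariance-isotropic} hold. Then \(q(t)=b\|t\|\) is positively homogeneous, and the inequality says exactly that \(q\) is an admissible radial denominator in the sense of Definition~\ref{def:admissible}. Theorem~\ref{thm:minimal-radial} then gives \(b\|u\|\geq C_X(u)\) for every unit vector \(u\), so \(b\geq\overline C_X\) after taking the supremum. If \(\overline C_X=\infty\), the same argument shows that no finite \(b\) works.

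The only point needing care is the edge case \(\overline C_X=\infty\). There the inequality with \(\overline C_X\) holds only vacuously, and the optimality statement means that no finite constant is valid. This follows from the same admissibility argument, so I expect no real obstacle; the result is essentially a direct corollary of radial minimality.
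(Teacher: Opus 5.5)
Your proposal is correct and follows the paper's proof. You show that \(q(t)=\overline C_X\|t\|\) dominates \(C_X\) by homogeneity, spelling out the monotonicity of the denominator and the zero-variance and \(\overline C_X=\infty\) edge cases that the paper leaves implicit. You then get optimality the same way the paper does, by observing that any valid constant defines an admissible denominator and applying Theorem~\ref{thm:minimal-radial}.
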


\begin{proof}
The denominator \(q(t)=\overline C_X\|t\|\) dominates \(C_X(t)\) by homogeneity and is therefore admissible. Conversely, any constant in \eqref{eq:covariance-isotropic} defines an admissible denominator of this form, so Theorem~\ref{thm:minimal-radial} forces it to dominate \(C_X(u)\) for every unit \(u\).
\end{proof}

Corollary~\ref{cor:euclidean} connects the directional theory to a one-constant vector inequality. Unlike an isotropic variance proxy, however, the numerator retains the exact covariance quadratic form.

\section{Geometry and structural properties}\label{sec:geometry}

We first record operations under which the radial pole behaves exactly or monotonically.

\begin{proposition}[Linear maps, sums, and L\'evy time]\label{prop:structure}
Let all random vectors below be centered and have finite second moments.
\begin{enumerate}
\item If \(L:\R^d\to\R^m\) is linear, then
\begin{equation}\label{eq:linear-map}
 C_{LX}(t)=C_X(L^\top t),\qquad t\in\R^m.
\end{equation}
\item If \(X\) and \(Y\) are independent vectors in \(\R^d\), then
\begin{equation}\label{eq:convolution}
 C_{X+Y}(t)\leq\max\{C_X(t),C_Y(t)\}.
\end{equation}
\item If \((L_r)_{r\geq0}\) is a centered L\'evy process with finite second moments, then for every \(r>0\),
\begin{equation}\label{eq:levy-time}
 C_{L_r}(t)=C_{L_1}(t).
\end{equation}
\end{enumerate}
\end{proposition}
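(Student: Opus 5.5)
All three parts will be read off Definition~\ref{def:pole}: each statement reduces to comparing the directional envelopes \eqref{eq:directional-envelope} of the relevant scalar projections. Infinite values are handled by the convention that the infimum of an empty set is infinite.

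For part 1, note that \(\ip{t}{LX}=\ip{L^\top t}{X}\) for every \(t\in\R^m\). Hence \(K_{LX}(st)=K_X(sL^\top t)\) for all \(s\geq0\), with extended values allowed, and \(V_{LX}(t)=t^\top L\Sigma L^\top t=V_X(L^\top t)\). The variance-zero cases agree: both sides are zero by convention, since \(V_{LX}(t)=0\) exactly when \(V_X(L^\top t)=0\). When the variance is positive, the two families of inequalities indexed by \(c\) are literally identical, so their infima coincide. One should note in passing that \(LX\) is again centered and infinitely divisible with finite second moment, so that \(C_{LX}\) is defined. This follows from the L\'evy--Khintchine formula, with \(LX\) having L\'evy measure \(\nu\circ L^{-1}\) restricted away from zero.

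For part 2, independence gives
\[
K_{X+Y}(st)=K_X(st)+K_Y(st)\quad\text{and}\quad V_{X+Y}(t)=V_X(t)+V_Y(t).
\]
Set \(c=\max\{C_X(t),C_Y(t)\}\); if \(c=\infty\) there is nothing to prove. Otherwise, for any \(c'>c\), both envelopes hold with scale \(c'\). This uses that a feasible scale stays feasible when increased, because the right side of \eqref{eq:directional-envelope} increases in \(c\) on the shrinking range \(s<1/c'\). The envelope also trivially holds for a summand of zero variance, since its cumulant is then zero. Adding the two inequalities, the right-hand sides sum to \((V_X+V_Y)s^2/(2(1-c's))\), so \(c'\) is feasible for \(X+Y\). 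Letting \(c'\downarrow c\) gives \eqref{eq:convolution}. An alternative is to use that the feasible set is closed, by letting \(c'\downarrow c\) pointwise in \(s\).

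For part 3, the L\'evy property gives \(K_{L_r}(u)=rK_{L_1}(u)\) for every \(u\), with extended values allowed, so \(K_{L_r}(st)=rK_{L_1}(st)\). Covariances scale the same way, \(V_{L_r}(t)=rV_{L_1}(t)\), because the Gaussian matrix and the L\'evy measure of \(L_r\) are \(rA\) and \(r\nu\). Dividing \eqref{eq:directional-envelope} for \(L_r\) by \(r>0\) gives exactly the envelope for \(L_1\), so the feasible sets coincide and the poles are equal. Equivalently, the canonical measure \(H_t\) scales by \(r\), so the law of \(R_t\) and hence \(M_t\) are unchanged, and Theorem~\ref{thm:variational} gives the conclusion.

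The only delicate point is bookkeeping with extended values. One must check that an infinite \(K\) on one side corresponds to an infinite \(K\) on the other (immediate here), and that monotonicity of feasibility in \(c\) holds, which is needed in part 2. Neither is a real obstacle.
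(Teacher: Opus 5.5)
Your proposal is correct and follows the paper's route: part 1 comes from \(\ip{t}{LX}=\ip{L^\top t}{X}\), part 2 from adding the two directional envelopes using additivity of cumulants and variances, and part 3 from the linear scaling of \(K\) and \(V\) in the time parameter. In part 2 you take an arbitrary \(c'>c\) and then let \(c'\downarrow c\). This is slightly more careful than the paper, which inserts \(c\) directly and so implicitly relies on the infimum being attained; that fact comes from the variational formula in Theorem~\ref{thm:variational}.
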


\begin{proof}
The identity \(\ip{t}{LX}=\ip{L^\top t}{X}\) proves part 1. For part 2, set
\[
 c=\max\{C_X(t),C_Y(t)\}.
\]
If \(c=\infty\), the claim is immediate. Assume \(c<\infty\).
Independence and covariance additivity give
\begin{align*}
 K_{X+Y}(st)
 &=K_X(st)+K_Y(st)\\
 &\leq\frac{s^2V_X(t)}{2(1-cs)}+
       \frac{s^2V_Y(t)}{2(1-cs)}
 =\frac{s^2V_{X+Y}(t)}{2(1-cs)}.
\end{align*}
Part 3 follows from \(K_{L_r}=rK_{L_1}\) and \(V_{L_r}=rV_{L_1}\): the normalized remainder MGF in \eqref{eq:factorization} does not depend on \(r\).
\end{proof}

The scale \(C_X\) alone need not determine the law. The richer remainder family does.

\begin{proposition}[Reconstruction from directional remainders]\label{prop:reconstruction}
Suppose that \(V_X(u)\) and \(\Law(R_u)\) are known for every unit vector \(u\) with \(V_X(u)>0\), and that directions with zero variance are identified. Then the law of \(X\), and hence its L\'evy triplet, is determined.
\end{proposition}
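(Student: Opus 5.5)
The idea is to recover the characteristic function of every one-dimensional projection \(\ip{t}{X}\) from the given data, and then invoke the Cram\'er--Wold device. The imaginary-argument factorization \eqref{eq:imaginary-factorization} of Theorem~\ref{thm:factorization} is exactly the tool for the first part.

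Fix a unit vector \(u\). If \(u\) is a zero-variance direction, then \(\ip{u}{X}=0\) almost surely, as noted after Definition~\ref{def:pole}. Hence \(\E\e^{ir\ip{u}{X}}=1\) for all \(r\), and the identification of these directions supplies this information. If \(V_X(u)>0\), the law of \(R_u\) determines its characteristic function \(\varphi_u(r)=\E\e^{irR_u}\). By \eqref{eq:imaginary-factorization},
\[
 \E\e^{ir\ip{u}{X}}=\exp\Bigl(-\tfrac12V_X(u)r^2\varphi_u(r)\Bigr),\qquad r\in\R.
\]
So the characteristic function of \(\ip{u}{X}\) is known on the whole line. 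Only the exponent, not a branch of a logarithm, is needed: the identity gives \(\Psi_X(ru)\) as a continuous function vanishing at zero, and exponentiating it is unambiguous.

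Now take an arbitrary \(t\in\R^d\setminus\{0\}\) and write \(t=\|t\|u\) with \(u\) a unit vector. Then
\[
 \E\e^{i\ip{t}{X}}=\E\e^{i\|t\|\ip{u}{X}},
\]
so the joint characteristic function of \(X\) is determined at every point, and trivially at \(t=0\). By uniqueness of characteristic functions (equivalently, Cram\'er--Wold), the law of \(X\) is determined. Uniqueness of the L\'evy--Khintchine triplet for an infinitely divisible law (Sato) then gives \(A\), \(\nu\) and the drift. In the second-moment form \eqref{eq:LK}, the drift is fixed by centering.

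The only delicate point is a mild justification rather than a computation. We must confirm that \eqref{eq:imaginary-factorization} holds for all real \(r\), and not just near \(0\), so that the projection's characteristic function is recovered globally rather than via a logarithm branch. Theorem~\ref{thm:factorization} already states it for every \(r\in\R\), with absolute convergence coming from the finite second moment. If one preferred extra robustness, one could instead recover the canonical measure directly. We have \(\Law(R_u)=\Law(BY_u)\), and since \(\log B\) has a nonvanishing characteristic function, \(\Law(Y_u)\) is determined off the atom at zero; with \(V_X(u)\), this gives \(H_u\). From \(H_u\) one gets \(u^\top Au\) and the projected L\'evy measures, and the Cram\'er--Wold route above is the simpler path.
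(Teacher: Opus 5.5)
Your proposal is correct and follows the paper's proof. Like the paper, you recover \(\Psi_X\) on every line through the origin from \eqref{eq:imaginary-factorization}, set it to zero on zero-variance lines, and then invoke uniqueness of the characteristic function and of the L\'evy--Khintchine triplet. Your optional Mellin-deconvolution aside is also valid, since \(\E B^{is}=2/((1+is)(2+is))\) never vanishes, but it is not needed.
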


\begin{proof}
For \(\theta\neq0\), write \(\theta=ru\) with \(u\) unit and \(r\in\R\). If \(V_X(u)>0\), formula \eqref{eq:imaginary-factorization} gives the characteristic exponent \(\Psi_X(ru)\) from \(V_X(u)\) and the characteristic function of \(R_u\). If \(V_X(u)=0\), the projection is zero and the exponent is zero on that line. Thus the characteristic function of \(X\) is known on \(\R^d\). Uniqueness of the characteristic function and of the L\'evy--Khintchine triplet completes the proof \citep{sato2013}.
\end{proof}

We next compare \(C_X\) with two familiar positively homogeneous functions. Let
\begin{equation}\label{eq:effective-domain}
 \mathcal E_X=\{t\in\R^d:K_X(t)<\infty\},
 \qquad D_X=\interior\mathcal E_X.
\end{equation}
The effective domain is convex by H\"older's inequality. Assume in this subsection that \(0\in D_X\). The open convex set \(D_X\) has Minkowski functional
\begin{equation}\label{eq:gauge}
 p_{D_X}(t)=\inf\{a>0:t\in aD_X\}.
\end{equation}
We also define the positive jump radius
\begin{equation}\label{eq:support-function}
 b_X(t)=\sup_{x\in\supp\nu}\ip{t}{x}_+\in[0,\infty],
 \qquad z_+=\max\{z,0\}.
\end{equation}
It is the support function of \(\supp\nu\cup\{0\}\), with extended values allowed.

\begin{lemma}[A one-sided exponential remainder]\label{lem:exp-bound}
For every real \(z<3\),
\begin{equation}\label{eq:exp-bound}
 \e^z-1-z\leq\frac{z^2}{2(1-z_+/3)}.
\end{equation}
\end{lemma}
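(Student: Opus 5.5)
Split on the sign of \(z\). For \(z\leq0\) we have \(z_+=0\), and the claim reduces to \(\e^z-1-z\leq z^2/2\). Set \(g(z)=z^2/2-(\e^z-1-z)\). Then \(g(0)=g'(0)=0\) and \(g''(z)=1-\e^z\geq0\) on \((-\infty,0]\). Hence \(g'(z)\leq g'(0)=0\) there, so \(g\) is nonincreasing on \((-\infty,0]\), and \(g(z)\geq g(0)=0\). This is the same fact already used in part 2 of Theorem~\ref{thm:variational}.

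For \(0\leq z<3\), compare power series term by term. We have
\[
 \e^z-1-z=\sum_{k\geq2}\frac{z^k}{k!}=\frac{z^2}{2}\sum_{j\geq0}\frac{2z^j}{(j+2)!},
\]
while
\[
 \frac{z^2}{2(1-z/3)}=\frac{z^2}{2}\sum_{j\geq0}\frac{z^j}{3^j}.
\]
Since \(z\geq0\), every term is nonnegative. It therefore suffices to prove the coefficient inequality
\[
 \frac{2}{(j+2)!}\leq 3^{-j},\qquad\text{equivalently}\qquad (j+2)!\geq 2\cdot3^j,\qquad j\geq0.
\]
We check this by induction. Equality holds at \(j=0\) and \(j=1\), since \(2=2\) and \(6=6\). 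Passing from \(j\) to \(j+1\) multiplies the left side by \(j+3\geq3\) and the right side by \(3\), so the inequality propagates. The geometric series converges precisely because \(z<3\), which is where the hypothesis is used.

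No step here is a genuine obstacle. The only point needing care is the coefficient comparison, where the constant \(3\) is exactly tight at \(j=1\) (the cubic term). This tightness is what later makes the factor \(1/3\) sharp, for instance for a centered Poisson law. The lower-order coefficients are equal, so the bound holds with equality to third order at \(z=0\); this explains why no larger constant than \(3\) can appear in the denominator.
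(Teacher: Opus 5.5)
Your proposal is correct and follows the paper's proof exactly: the nonpositive side via nonnegativity of \(z^2/2-(\e^z-1-z)\), and the range \(0\leq z<3\) via term-by-term comparison with the geometric series using \(n!\geq 2\cdot 3^{n-2}\). Your convexity argument for \(z\leq0\) and the induction for the factorial inequality simply supply the details the paper leaves implicit.
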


\begin{proof}
For \(z\leq0\), the function \(z^2/2-(\e^z-1-z)\) is nonnegative. For \(0\leq z<3\), compare power-series coefficients:
\[
 \e^z-1-z=\sum_{n\geq2}\frac{z^n}{n!}
 \leq\frac{z^2}{2}\sum_{k\geq0}\left(\frac z3\right)^k,
\]
because \(n!\geq2\,3^{n-2}\) for \(n\geq2\).
\end{proof}

\begin{theorem}[Domain and support sandwich]\label{thm:sandwich}
If \(0\in D_X\), then for every \(t\in\R^d\),
\begin{equation}\label{eq:sandwich}
 p_{D_X}(t)\leq C_X(t)\leq\frac{b_X(t)}{3}.
\end{equation}
The factor \(1/3\) in the upper bound is best possible over centered infinitely divisible laws with bounded positive jumps.
\end{theorem}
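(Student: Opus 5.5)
The plan is to prove the two inequalities in \eqref{eq:sandwich} separately and then exhibit a centered Poisson law for which the upper bound is attained.

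\emph{Lower bound.} It suffices to treat \(t\) with \(C_X(t)<\infty\) and \(V_X(t)>0\). If \(V_X(t)=0\), then \(\ip{t}{X}=0\) almost surely, so \(K_X(st)=0\) for all \(s\) and the whole line \(\R t\) lies in \(\mathcal E_X\). Together with \(0\in D_X\) and convexity, this puts \(st\) in \(D_X\) for every \(s>0\), so \(p_{D_X}(t)=0\). When \(V_X(t)>0\) and \(c=C_X(t)<\infty\), the envelope \eqref{eq:directional-envelope} holds for \(c\) itself, since the infimum is attained: the conditions are closed in \(c\), as the variational formula \eqref{eq:variational} shows. Hence \(K_X(st)<\infty\) for all \(0\le s<1/c\), so the segment \(\{st:0\le s<1/c\}\) lies in \(\mathcal E_X\). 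Convexity of \(\mathcal E_X\) and \(0\in D_X\) then give \(st\in D_X\) for \(s<1/c\). Indeed, a point of the open segment between an interior point and a point of a convex set is interior. Applying this to \(s'\in(s,1/c)\) shows \(t\in(1/s)D_X\), and therefore \(p_{D_X}(t)\le 1/s\) for every \(s<1/c\). Letting \(s\uparrow1/c\) gives \(p_{D_X}(t)\le c\), and the case \(c=0\) gives \(p_{D_X}(t)=0\).

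\emph{Upper bound.} Assume \(b=b_X(t)<\infty\); otherwise there is nothing to prove. For \(0\le s<3/b\) and \(\nu\)-almost every \(x\) we have \(s\ip{t}{x}\le sb<3\). Lemma~\ref{lem:exp-bound} applied to \(z=s\ip{t}{x}\) gives
\[
\e^{z}-1-z\le \frac{z^2}{2(1-z_+/3)}\le\frac{z^2}{2(1-sb/3)}.
\]
Integrating against \(\nu\) in \eqref{eq:LK}, and bounding the Gaussian term \(\tfrac12 s^2t^\top At\) by the same denominator, yields \(K_X(st)\le V_X(t)s^2/(2(1-sb/3))\). So \(b/3\) is feasible in Definition~\ref{def:pole}. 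Finiteness of \(K_X(st)\) here is automatic, because the positive part of the integrand is bounded by the second moment times \(\e^{3}\)-type constants.

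\emph{Sharpness.} Take \(d=1\), \(\nu=\lambda\delta_1\) and \(X=N-\lambda\) with \(N\) Poisson. Then \(b_X(1)=1\) and \(V=\lambda\). Part 3 of Theorem~\ref{thm:variational} gives \(C_X(1)\ge\kappa_3/(3V)=\lambda/(3\lambda)=1/3\), which matches the upper bound. Alternatively, \(M_1(s)=\E\e^{sB}\) has slope \(1/3\) at \(s=0\), as can be read off the variational formula. The main subtlety is in the lower bound: one must check the attainment of the infimum, or else argue with \(c'>c\) arbitrary, and the interior-point convexity step. Using arbitrary \(c'\downarrow c\) avoids the attainment issue entirely, so I would adopt that route.
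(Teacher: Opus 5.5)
Your proof is correct and follows essentially the same route as the paper. A finite feasible scale forces \(K_X(st)<\infty\) on \([0,1/c)\), and convexity of \(\mathcal E_X\) with \(0\in D_X\) turns this into the gauge bound. The upper bound integrates Lemma~\ref{lem:exp-bound} against \(\nu\), and sharpness comes from the centered Poisson law via the third-cumulant bound.

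The only difference is in presentation. The paper argues the lower bound by contradiction through \(\tau(t)=\sup\{s\geq0:K_X(st)<\infty\}\) and the asserted identity \(p_{D_X}(t)=1/\tau(t)\). You instead verify the interior-point convexity step explicitly and treat \(V_X(t)=0\) separately, which fills in a detail the paper leaves implicit.
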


\begin{proof}
Fix \(t\) and let
\[
 \tau(t)=\sup\{s\geq0:K_X(st)<\infty\}.
\]
Since \(0\in D_X\), convexity gives \(p_{D_X}(t)=1/\tau(t)\), with the usual extended conventions. If a directional scale \(c\) were smaller than \(p_{D_X}(t)\), then \(1/c>\tau(t)\). The envelope would require \(K_X(st)<\infty\) at some \(s\in(\tau(t),1/c)\), a contradiction. Taking the least scale proves the lower bound.

The upper bound is immediate when \(b_X(t)=\infty\). Suppose \(b=b_X(t)<\infty\), and take \(0\leq s<3/b\). For \(y=\ip{t}{x}\), Lemma~\ref{lem:exp-bound} and \(y_+\leq b\) yield
\[
 \e^{sy}-1-sy\leq\frac{s^2y^2}{2(1-sb/3)}.
\]
The Gaussian term in \eqref{eq:LK} is bounded by the same denominator. Integration gives
\[
 K_X(st)\leq\frac{V_X(t)s^2}{2(1-sb/3)},
\]
so \(C_X(t)\leq b/3\).

For sharpness, let \(N\) be Poisson with mean \(\lambda>0\) and put \(Y=a(N-\lambda)\), \(a>0\). Its positive jump radius is \(a\), while \(V=\lambda a^2\) and \(\kappa_3=\lambda a^3\). Theorem~\ref{thm:variational} gives \(C_Y\geq a/3\), and the upper bound gives the reverse inequality.
\end{proof}

The lower function in \eqref{eq:sandwich} is sublinear because it is the gauge of a convex set \citep{rockafellar1970}. The upper function is sublinear because it is a support function. The radial pole lies between them, but it need not inherit their convexity; Section~\ref{sec:regularity} gives an exact counterexample.

The zero directions have a simple cone description. For a cone \(K\subset\R^d\), write
\[
 K^\circ=\{t:\ip{t}{x}\leq0\text{ for every }x\in K\}.
\]

\begin{corollary}[Zero cone]\label{cor:zero-cone}
For every centered infinitely divisible vector with finite second moment,
\begin{equation}\label{eq:zero-cone}
 \{t:C_X(t)=0\}
 =\{t:b_X(t)=0\}
 =\bigl(\overline{\cone}(\supp\nu)\bigr)^\circ.
\end{equation}
\end{corollary}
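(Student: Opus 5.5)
We prove the two equalities in \eqref{eq:zero-cone} separately, each by a direct set-theoretic argument, invoking part 2 of Theorem~\ref{thm:variational} for the first and elementary polarity of cones for the second. The corollary carries no assumption \(0\in D_X\), so we avoid Theorem~\ref{thm:sandwich} and rely only on the zero criterion \eqref{eq:no-positive-jumps}, which holds unconditionally.

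\emph{First equality.} Fix \(t\). Suppose first that \(V_X(t)=0\). Then \(C_X(t)=0\) by Definition~\ref{def:pole}. Also \(\int\ip{t}{x}^2\,\nu(dx)=0\) by \eqref{eq:covariance}, so \(\ip{t}{x}=0\) for \(\nu\)-a.e.\ \(x\). By continuity this holds on all of \(\supp\nu\), since the set where \(\ip{t}{x}\neq0\) is open and \(\nu\)-null. Hence \(b_X(t)=0\).

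Now suppose \(V_X(t)>0\). By part 2 of Theorem~\ref{thm:variational}, \(C_X(t)=0\) if and only if \(\nu\{x:\ip{t}{x}>0\}=0\). The set \(U_t=\{x:\ip{t}{x}>0\}\) is open. An open set is \(\nu\)-null exactly when it misses \(\supp\nu\): if \(U_t\) met \(\supp\nu\) at some point \(x_0\), then \(U_t\) would be an open neighbourhood of \(x_0\) and hence have positive mass; conversely, \(\nu(\R^d\setminus\supp\nu)=0\) because \(\R^d\) is second countable. Therefore \(\nu(U_t)=0\) if and only if \(\ip{t}{x}\le0\) for all \(x\in\supp\nu\), which is exactly \(b_X(t)=0\) by \eqref{eq:support-function}.

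\emph{Second equality.} The condition \(b_X(t)=0\) says \(\ip{t}{x}\le0\) for every \(x\in\supp\nu\). The set \(\{x:\ip{t}{x}\le0\}\) is a closed convex cone. It therefore contains \(\supp\nu\) if and only if it contains \(\overline{\cone}(\supp\nu)\). This is precisely the statement \(t\in(\overline{\cone}(\supp\nu))^\circ\).

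There is no substantial obstacle. The only delicate points are the degenerate case \(V_X(t)=0\), handled above, and the identification of ``\(\nu\) gives zero mass to the open half-space'' with ``\(\supp\nu\) lies in the closed half-space''. The latter needs the standard fact that the complement of the support is \(\nu\)-null. This holds because \(\nu\) is a Borel measure on \(\R^d\), which is second countable; \(\nu\) may be infinite near the origin, but this does not affect the argument.
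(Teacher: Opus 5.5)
Your proof is correct and takes the same route as the paper. Part~2 of Theorem~\ref{thm:variational} turns \(C_X(t)=0\) into \(\nu\{x:\ip{t}{x}>0\}=0\); this is equivalent to \(\supp\nu\) lying in the closed half-space \(\{\ip{t}{x}\leq0\}\), and polarity then passes to the closed conic hull. Your explicit handling of the degenerate case \(V_X(t)=0\), and of why an open set is \(\nu\)-null exactly when it misses \(\supp\nu\), fills in details the paper's short proof leaves implicit.
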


\begin{proof}
Theorem~\ref{thm:variational} identifies \(C_X(t)=0\) with the absence of positive projected jumps. This is equivalent to \(\ip{t}{x}\leq0\) for all \(x\in\supp\nu\), which is the second equality in \eqref{eq:zero-cone} and is unchanged under conic hull and closure.
\end{proof}

For a purely Gaussian vector, the L\'evy support is empty, the polar cone is all of \(\R^d\), and \(C_X\equiv0\). For a jump measure whose closed conic hull is \(\R^d\), the zero cone reduces to the origin.

\section{Regularity and nonconvexity}\label{sec:regularity}

Positive homogeneity does not imply subadditivity. The following example uses only two L\'evy atoms and gives every scale in closed form.

\begin{proposition}[The radial pole need not be subadditive]\label{prop:non-subadditive}
Let \(N_1,N_2\) be independent Poisson variables with mean one, put
\[
 x_1=(-2,-2),\qquad x_2=(-2,-1),
\]
and define
\begin{equation}\label{eq:cp-vector}
 X=N_1x_1+N_2x_2-(x_1+x_2).
\end{equation}
Then \(\Sigma\) is positive definite, but for
\[
 t_1=(-2,1),\qquad t_2=(-1,2),
\]
one has
\begin{equation}\label{eq:non-subadditive-values}
 C_X(t_1)=\frac{35}{39},\qquad
 C_X(t_2)=0,
 \qquad C_X(t_1+t_2)=1.
\end{equation}
Consequently,
\begin{equation}\label{eq:non-subadditive}
 C_X(t_1+t_2)>C_X(t_1)+C_X(t_2).
\end{equation}
\end{proposition}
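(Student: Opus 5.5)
The plan is to compute each pole directly from the projected L\'evy measure, using the results of Sections~\ref{sec:factorization} and~\ref{sec:geometry}. Here \(A=0\) and \(\nu=\delta_{x_1}+\delta_{x_2}\). Hence \(\Sigma=x_1x_1^\top+x_2x_2^\top\), and this is positive definite because \(x_1=(-2,-2)\) and \(x_2=(-2,-1)\) are linearly independent.

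The needed inner products are
\[
\ip{t_1}{x_1}=2,\quad \ip{t_1}{x_2}=3,\quad \ip{t_2}{x_1}=-2,\quad \ip{t_2}{x_2}=0,\quad \ip{t_1+t_2}{x_1}=0,\quad \ip{t_1+t_2}{x_2}=3 .
\]
For \(t_2\) the projected L\'evy measure does not charge \((0,\infty)\). Part~2 of Theorem~\ref{thm:variational} then gives \(C_X(t_2)=0\).

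For \(t_1+t_2\) the projection is a centered Poisson variable with the single jump size \(3\). The atom at \(x_1\) projects to \(0\) and contributes neither variance nor cumulant. The sharpness argument of Theorem~\ref{thm:sandwich} therefore applies with \(a=3\). It gives the lower bound \(\kappa_3/(3V)=27/27=1\) and the upper bound \(b_X/3=1\), so \(C_X(t_1+t_2)=1\).

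The main step is \(t_1\). Here \(V_X(t_1)=4+9=13\) and \(\kappa_3(t_1)=8+27=35\), so part~3 of Theorem~\ref{thm:variational} gives \(C_X(t_1)\ge 35/39\). The support bound only gives \(1\), so the reverse inequality must be proved directly. We must show that, for \(0\le s<39/35\),
\[
 K_X(st_1)=\bigl(\e^{2s}-1-2s\bigr)+\bigl(\e^{3s}-1-3s\bigr)\le \frac{13 s^2}{2\bigl(1-\tfrac{35}{39}s\bigr)}.
\]
The approach is to compare power-series coefficients, exactly as in Lemma~\ref{lem:exp-bound}. We need
\[
 a_n:=\frac{2^n+3^n}{n!}\le \frac{13}{2}\Bigl(\frac{35}{39}\Bigr)^{n-2}=:b_n,\qquad n\ge2 .
\]
Equality holds at \(n=2\) (\(13/2\)) and at \(n=3\) (\(35/6\)). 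For \(n\ge3\) the ratio satisfies \(a_{n+1}/a_n\le 3/(n+1)\le 3/4\), which is below \(35/39=b_{n+1}/b_n\). Induction from \(n=3\) then gives \(a_n\le b_n\) for all \(n\ge 2\). Since all terms are nonnegative, summing yields the envelope, so \(35/39\) is a feasible scale and \(C_X(t_1)=35/39\).

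The only delicate point is this coefficient comparison. Tangency at both \(n=2\) and \(n=3\) means there is no slack, so the ratio argument must start exactly at \(n=3\). Combining the three values gives \(C_X(t_1)+C_X(t_2)=35/39<1=C_X(t_1+t_2)\), which is \eqref{eq:non-subadditive}.
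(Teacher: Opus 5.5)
Your proposal is correct and follows the paper's proof. You use the same projected jump sizes, Part~2 of Theorem~\ref{thm:variational} for \(t_2\), and the cubic lower bound matched by the \(b_X/3\) upper bound for \(t_1+t_2\). For \(t_1\) you use the same coefficient comparison: equality at \(n=2,3\), then a ratio induction from \(n=3\). The only cosmetic difference is that you compare \((2^n+3^n)/n!\) with \(\frac{13}{2}(35/39)^{n-2}\), whereas the paper compares the unnormalized coefficients \(2^n+3^n\) and \(\frac{13}{2}n!\,c^{n-2}\) through the ratios \(3\) and \((n+1)c>3\).
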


\begin{proof}
The vectors \(x_1,x_2\) span \(\R^2\), so
\(\Sigma=x_1x_1^\top+x_2x_2^\top\) is positive definite. In direction \(t_1\), the two jump sizes are \(2\) and \(3\). Hence
\[
 K_X(st_1)=\sum_{n\geq2}\frac{(2^n+3^n)s^n}{n!},
 \qquad V_X(t_1)=13,
 \qquad \kappa_3(t_1)=35.
\]
The cubic lower bound gives \(C_X(t_1)\geq35/39\). Set \(c=35/39\),
\[
 A_n=2^n+3^n,
 \qquad B_n=\frac{13n!}{2}c^{n-2}.
\]
Then \(A_2=B_2=13\) and \(A_3=B_3=35\). For \(n\geq3\),
\[
 A_{n+1}<3A_n,
 \qquad \frac{B_{n+1}}{B_n}=(n+1)c>3.
\]
Induction gives \(A_n\leq B_n\) for all \(n\geq2\). Comparing the nonnegative coefficients proves
\[
 K_X(st_1)\leq\frac{13s^2}{2(1-cs)},
\]
and therefore \(C_X(t_1)=35/39\).

In direction \(t_2\), the jumps are \(-2\) and \(0\), so Theorem~\ref{thm:variational} gives \(C_X(t_2)=0\). In direction \(t_1+t_2=(-3,3)\), they are \(0\) and \(3\). The cubic lower bound and Theorem~\ref{thm:sandwich} both equal one, which proves the last value in \eqref{eq:non-subadditive-values} and hence \eqref{eq:non-subadditive}.
\end{proof}

The next result gives a clean sufficient condition for directional continuity. It is stated on the unit sphere \(\mathbb S^{d-1}\); positive homogeneity then controls all nonzero points.

\begin{theorem}[Continuity under global exponential moments]\label{thm:continuity}
Assume that \(\Sigma\) is positive definite and
\begin{equation}\label{eq:all-exponential-moments}
 K_X(t)<\infty\qquad\text{for every }t\in\R^d.
\end{equation}
Then \(u\mapsto C_X(u)\) is finite and continuous on \(\mathbb S^{d-1}\).
\end{theorem}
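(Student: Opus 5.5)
Finiteness is immediate from Theorem~\ref{thm:variational}, part 1, since \(K_X(su)<\infty\) for all \(s\); positive definiteness gives \(V_X(u)\geq\lambda_{\min}(\Sigma)>0\) on the sphere. For continuity I will work with the variational formula \eqref{eq:variational}, rewritten through the factorization as
\[
 C_X(u)=\max\Bigl\{0,\sup_{s>0}g(s,u)\Bigr\},\qquad
 g(s,u)=\frac1s\Bigl(1-\frac{V_X(u)s^2}{2K_X(su)}\Bigr),
\]
where \(K_X(su)>0\) for \(s>0\), because \(V_X(u)>0\) makes the integrand in \eqref{eq:LK} nonnegative and not identically zero. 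The plan is to show that \(g\) is jointly continuous on \((0,\infty)\times\mathbb S^{d-1}\), extends continuously to \(s=0\), and that the supremum can be restricted to a compact \(s\)-interval uniformly in \(u\). A supremum over a compact set of a jointly continuous function is then continuous in \(u\).

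\emph{Step 1: joint continuity for \(s>0\).} Under \eqref{eq:all-exponential-moments} the cumulant \(K_X\) is a finite convex function on \(\R^d\), hence continuous. Thus \((s,u)\mapsto K_X(su)\) and \(V_X(u)\) are continuous, and \(g\) is continuous wherever \(s>0\).

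\emph{Step 2: behaviour near \(s=0\), uniformly in \(u\).} Since all exponential moments are finite, \(\int|\ip{u}{x}|^3\nu(dx)\) is finite and continuous in \(u\). A third-order Taylor bound of the kind used in the proof of Theorem~\ref{thm:variational}, with \(\e^{|z|}\) controlling the remainder, gives
\[
 K_X(su)=\frac{V_X(u)s^2}{2}+\frac{\kappa_3(u)s^3}{6}+O(s^4),
\]
uniformly for \(u\in\mathbb S^{d-1}\). The uniformity comes from dominating \(\int\ip{u}{x}^4\e^{s_0|\ip{u}{x}|}\nu(dx)\) by \(\int\|x\|^4\e^{s_0\|x\|}\nu(dx)\), which is finite because \(\e^{s_0\|x\|}\leq\sum_{\pm e_i}\e^{s_0\sqrt d\,\ip{\pm e_i}{x}}\) and \(K_X\) is finite everywhere. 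Hence \(g(s,u)\to\kappa_3(u)/(3V_X(u))\) uniformly as \(s\downarrow0\). Setting \(g(0,u)\) equal to this limit makes \(g\) jointly continuous on \([0,\infty)\times\mathbb S^{d-1}\).

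\emph{Step 3: truncation at large \(s\).} This is the main obstacle. We always have \(g(s,u)\leq1/s\), but we must make sure the supremum is not approached only as \(s\to\infty\) in a way that fails to be uniform in \(u\). I will handle it with a case split.
\begin{itemize}
\item[] \emph{Case \(C_X(u_0)>0\).} For \(u\) near \(u_0\), \(C_X(u)\geq\liminf\) is not yet available, so I first obtain lower semicontinuity directly: \(\sup_s g\) is a supremum of continuous functions of \(u\), hence lower semicontinuous. The positive part is then l.s.c. as well, so \(C_X(u)>C_X(u_0)/2=:\eta\) nearby. For such \(u\), only \(s\leq2/\eta\) matters, since \(g\leq1/s<\eta\) beyond that. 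On the compact interval \([0,2/\eta]\), joint continuity gives upper semicontinuity of the supremum as well.
\item[] \emph{Case \(C_X(u_0)=0\).} Lower semicontinuity is trivial here, and I need \(\limsup_{u\to u_0}C_X(u)=0\). Fix \(\varepsilon>0\). For \(s>1/\varepsilon\) we have \(g<\varepsilon\) automatically. On \([0,1/\varepsilon]\), uniform continuity of \(g\), together with \(g(\cdot,u_0)\leq0\), gives \(g(s,u)<\varepsilon\) for \(u\) near \(u_0\).
\end{itemize}

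In both cases the bound \(g\leq1/s\) is what makes the truncation work. The delicate part, where care is needed, is the uniform small-\(s\) expansion in Step 2, since it is what makes \(g\) continuous up to \(s=0\). This is also exactly where \eqref{eq:all-exponential-moments} is used; the finite-variance counterexample promised in the introduction shows that this step fails without it.
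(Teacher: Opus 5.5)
Your proof is correct and follows essentially the paper's route. You use the same function $g$ (the paper's $F$), extended continuously to $s=0$ by $\kappa_3(u)/(3V_X(u))$, the same truncation via $g\le 1/s$, and continuity of maxima of a jointly continuous function over a compact $s$-interval.

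The paper avoids your case split. It notes that $G_T(u)=\max\{0,\max_{0\le s\le T}g(s,u)\}$ satisfies $0\le C_X(u)-G_T(u)\le 1/T$ uniformly on the sphere, so $C_X$ is a uniform limit of continuous functions. Your Step 2 domination argument usefully spells out the uniform Taylor expansion that the paper only asserts.
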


\begin{proof}
For \(u\in\mathbb S^{d-1}\) and \(s>0\), set
\[
 M(u,s)=\frac{2K_X(su)}{V_X(u)s^2},
 \qquad
 F(u,s)=\frac{1-M(u,s)^{-1}}s.
\]
Assumption \eqref{eq:all-exponential-moments} permits differentiation of the L\'evy integral on every compact subset of \(\R^d\), so \(K_X\) is smooth. Since \(\Sigma\) is positive definite, \(V_X(u)\) is bounded away from zero on the sphere. Taylor expansion, uniformly for \(u\) on the sphere, therefore extends the two functions continuously to \(s=0\) by
\begin{equation}\label{eq:F-zero}
 M(u,0)=1,
 \qquad F(u,0)=\frac{\kappa_3(u)}{3V_X(u)}.
\end{equation}
For \(T>0\), define
\[
 G_T(u)=\max\left\{0,\max_{0\leq s\leq T}F(u,s)\right\}.
\]
The maximum of a continuous function over the fixed compact interval \([0,T]\) is continuous in \(u\). Also \(M(u,s)>0\), so \(F(u,s)\leq1/s\). The variational formula gives
\begin{equation}\label{eq:uniform-approximation}
 0\leq C_X(u)-G_T(u)\leq\frac1T,
 \qquad u\in\mathbb S^{d-1}.
\end{equation}
Thus \(G_T\) converges uniformly to \(C_X\) as \(T\to\infty\), proving continuity and finiteness.
\end{proof}

The global moment assumption is stronger than pointwise finiteness near a selected direction. Without it, even local boundedness can fail.

\begin{proposition}[Finite variance does not imply local boundedness]\label{prop:discontinuity}
There is a centered compound Poisson vector in \(\R^2\) with finite second moment and positive-definite covariance such that \(C_X(u_0)=0\) for one unit vector \(u_0\), while \(C_X(u_n)=\infty\) along unit vectors \(u_n\to u_0\).
\end{proposition}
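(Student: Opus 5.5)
We will build the example from a L\'evy measure with one finite atom pointing into a half-plane and an infinite family of heavy-tailed atoms whose directions accumulate on the boundary of that half-plane. The tool for zero is part~2 of Theorem~\ref{thm:variational}; the tool for infinity is part~1, i.e.\ the lower bound of Theorem~\ref{thm:sandwich} in the form ``\(K_X(su)=\infty\) for every \(s>0\) forces \(C_X(u)=\infty\)''. Take \(u_0=(0,1)\).

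We make \(\nu\) charge only the closed lower half-plane \(\{x_2\leq0\}\), so \(\ip{u_0}{x}\leq0\) on \(\supp\nu\). Specifically, let
\[
 \nu=\delta_{(0,-1)}+\sum_{k\geq1}w_k\,\delta_{(k,\,-1)}+\sum_{k\geq1}w_k\,\delta_{(-k,\,-1)}.
\]
The weights are chosen so that \(\sum_k w_k k^2<\infty\), for example \(w_k=k^{-4}\). Then \(\int\|x\|^2\,\nu(dx)<\infty\), and \(\nu\) is finite, so \(X\) is compact-compensated compound Poisson with finite second moment. The support contains \((0,-1)\), \((1,-1)\) and \((-1,-1)\), which span \(\R^2\), so \(\Sigma\succeq\int xx^\top d\nu\) is positive definite. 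Centering by the drift as in \eqref{eq:LK} is legitimate.

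Zero in \(u_0\): \(\ip{u_0}{x}=-1\) on every atom, so \(\nu\{\ip{u_0}{x}>0\}=0\), and Theorem~\ref{thm:variational}(2) gives \(C_X(u_0)=0\).

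Infinity nearby: let \(u_n\) be the normalization of \((1/n,1)\), so \(u_n\to u_0\). For the atom \((k,-1)\),
\[
 \ip{(1/n,1)}{(k,-1)}=k/n-1,
\]
which tends to infinity linearly in \(k\). Hence for every \(s>0\),
\[
 K_X(su_n)\geq\sum_k w_k\bigl(\e^{s'(k/n-1)}-1-s'(k/n-1)\bigr)=\infty,
 \qquad s'=s/\|(1/n,1)\|,
\]
because \(k^{-4}\e^{ck}\) is not summable for any \(c>0\). Here I used that \(\e^z-1-z\geq0\), so the other atoms cannot cancel the divergence. Part~1 of Theorem~\ref{thm:variational} then gives \(C_X(u_n)=\infty\). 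By symmetry the atoms \((-k,-1)\) give the same conclusion for \(u_n\) tilted the other way. Those atoms are not needed, but they keep the picture symmetric.

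The main point needing care, though it is routine, is that \(\nu\) is a legitimate L\'evy measure with finite second moment, so that \(X\) is well defined and centered. Both properties follow from the summability \(\sum_k w_k(k^2+1)<\infty\). One must also check that positive definiteness does not rely on the heavy atoms, which the three spanning atoms secure. The essential mechanism is worth stating: finite polynomial moments permit exponential tails in every direction tilted off \(u_0\) toward the jumps. Only the global exponential moment hypothesis of Theorem~\ref{thm:continuity} excludes this.
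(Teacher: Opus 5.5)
Your construction is correct and essentially the paper's: a finite L\'evy measure whose atoms all project nonpositively onto \(u_0\) but whose tilted projections are unbounded above (linearly in \(k\) for your atoms \((\pm k,-1)\), quadratically for the paper's atoms \((-n,n^2)\) with weights \(n^{-8}\)), so polynomial weights give finite variance while every tilted directional MGF diverges and part~1 of Theorem~\ref{thm:variational} gives \(C_X(u_n)=\infty\). One small correction: the lower bound of Theorem~\ref{thm:sandwich} is not the right citation, since it assumes \(0\in D_X\) and in your example \(\mathcal E_X\) is the line \(\{t_1=0\}\); part~1 of Theorem~\ref{thm:variational}, or directly Definition~\ref{def:pole}, is what you actually use and suffices.
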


\begin{proof}
Take no Gaussian component and the finite L\'evy measure
\begin{equation}\label{eq:discontinuous-levy}
 \nu=\sum_{n\geq1}n^{-8}\delta_{(-n,n^2)},
\end{equation}
with the drift chosen to center the law. It has finite second moment because
\[
 \sum_{n\geq1}n^{-8}(n^2+n^4)<\infty.
\]
Its support contains two linearly independent vectors, so its covariance is positive definite.

Let \(u_0=(1,0)\). Every projected jump is \(-n<0\), hence \(C_X(u_0)=0\). For \(\varepsilon>0\), put
\[
 u_\varepsilon=\frac{(1,\varepsilon)}{\sqrt{1+\varepsilon^2}}.
\]
The \(n\)th projected jump is
\[
 y_{n,\varepsilon}=\frac{-n+\varepsilon n^2}{\sqrt{1+\varepsilon^2}}.
\]
For all sufficiently large \(n\), it is positive and grows quadratically. Consequently, for every \(s>0\), the terms
\(n^{-8}\e^{s y_{n,\varepsilon}}\) do not tend to zero, and the positive part of the L\'evy integral diverges. Thus \(K_X(su_\varepsilon)=\infty\) for every \(s>0\). Part 1 of Theorem~\ref{thm:variational} gives \(C_X(u_\varepsilon)=\infty\). Taking \(\varepsilon\downarrow0\) proves the claim.
\end{proof}

Proposition~\ref{prop:discontinuity} also explains why extended values are part of the definition rather than a technical afterthought. Finite covariance controls quadratic behavior at the origin but does not supply a common positive neighborhood for directional MGFs.

\section{Exact gamma-ray models}\label{sec:gamma}

The geometric lower bound becomes exact for a broad finite-ray model. Let \(Z\) be a centered Gaussian vector in \(\R^d\) with covariance \(A\). For \(j=1,\ldots,m\), let \(G_j\) be independent gamma variables with shape \(\alpha_j>0\) and scale \(\beta_j>0\), independent also of \(Z\), and let \(v_j\in\R^d\). Consider
\begin{equation}\label{eq:gamma-ray-model}
 X=Z+\sum_{j=1}^m v_j(G_j-\alpha_j\beta_j).
\end{equation}

\begin{theorem}[Gamma-ray formula]\label{thm:gamma-rays}
For the model \eqref{eq:gamma-ray-model},
\begin{equation}\label{eq:gamma-cumulant}
 K_X(t)=\frac12t^\top A t+
 \sum_{j=1}^m\alpha_j\left[-\log(1-\beta_j\ip{t}{v_j})-
 \beta_j\ip{t}{v_j}\right]
\end{equation}
on the domain
\begin{equation}\label{eq:gamma-domain}
 D_X=\{t:\beta_j\ip{t}{v_j}<1\text{ for }j=1,\ldots,m\}.
\end{equation}
For every \(t\in\R^d\),
\begin{equation}\label{eq:gamma-pole}
 C_X(t)=\max_{1\leq j\leq m}
 \bigl(\beta_j\ip{t}{v_j}\bigr)_+
 =p_{D_X}(t).
\end{equation}
Equivalently, if
\begin{equation}\label{eq:gamma-polytope}
 P=\conv\{0,\beta_1v_1,\ldots,\beta_mv_m\},
\end{equation}
then \(C_X=h_P\), the support function of \(P\).
\end{theorem}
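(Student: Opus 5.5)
Fix \(t\) and write \(w_j=\beta_j\ip{t}{v_j}\) and \(w^*=\max_j(w_j)_+\). The plan is to prove \(p_{D_X}(t)\leq C_X(t)\leq w^*\) and \(p_{D_X}(t)=w^*\); the equality \(C_X=h_P\) then follows from \(h_P(t)=\max\{0,\max_j\ip{t}{\beta_jv_j}\}\).

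\emph{Cumulant and domain.} The cumulant formula \eqref{eq:gamma-cumulant} is additive over the independent summands: a Gaussian term plus the centered gamma cumulants \(-\alpha_j\log(1-\beta_j y)-\alpha_j\beta_j y\) with \(y=\ip{t}{v_j}\). Each term is finite exactly when \(w_j<1\). This gives \(\mathcal E_X\), which equals its interior \(D_X\) in \eqref{eq:gamma-domain}, and shows \(0\in D_X\). Since \(D_X\) is an intersection of open half-spaces containing the origin, the ray \(\{st:s\geq0\}\) leaves \(D_X\) at \(\tau(t)=1/w^*\). Hence \(p_{D_X}(t)=w^*\). The lower bound \(C_X\geq p_{D_X}\) is then the first half of Theorem~\ref{thm:sandwich}.

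\emph{Upper bound.} This is the main step. The tempting move is to quote the upper half of Theorem~\ref{thm:sandwich}, but gamma jumps are unbounded, so \(b_X=\infty\) and that bound is useless. Instead I would show directly that each summand is sub-gamma with scale \(w^*\), and then sum as in part 2 of Proposition~\ref{prop:structure}. The Gaussian part has \(K=\frac12 s^2t^\top At\), which is already below \(s^2 t^\top A t/(2(1-w^*s))\).

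For the \(j\)-th ray, put \(w=w_j\). Its variance in direction \(t\) is \(\alpha_j\beta_j^2\ip{t}{v_j}^2=\alpha_jw^2\), and its cumulant is \(\alpha_j(-\log(1-sw)-sw)=\alpha_j\sum_{n\geq2}(sw)^n/n\).
\begin{itemize}
\item If \(w\leq0\), the summand has no positive jumps, so the required bound holds with scale zero by part 2 of Theorem~\ref{thm:variational}, or directly from \(-\log(1-z)-z\leq z^2/2\) for \(z\leq0\).
\item If \(w>0\), use \(1/n\leq 1/2\) for \(n\geq2\) to get
\[
 \sum_{n\geq2}\frac{z^n}{n}\leq\frac{z^2}{2(1-z)},\qquad 0\leq z<1,
\]
which is the sub-gamma bound with scale \(w\leq w^*\), valid for \(s<1/w^*\).
\end{itemize}
Adding these bounds, with the variances summing to \(V_X(t)\), gives
\[
 K_X(st)\leq\frac{V_X(t)s^2}{2(1-w^*s)},\qquad 0\leq s<1/w^*,
\]
so \(C_X(t)\leq w^*\).

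\emph{Degenerate case.} If \(V_X(t)=0\), then \(C_X(t)=0\) by definition. In that case every ray with \(\ip{t}{v_j}\neq0\) would contribute positive variance, so all \(w_j=0\) and \(w^*=0\), consistent with the formula. The ray \(\{st\}\) then stays in \(D_X\) forever, so \(p_{D_X}(t)=0\) as well.
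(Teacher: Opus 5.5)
Your proposal is correct and follows the paper's proof essentially step for step. The lower bound \(C_X(t)\geq w^*\) comes from the gauge of the intersection of half-spaces together with the lower half of Theorem~\ref{thm:sandwich}, and the upper bound \(C_X(t)\leq w^*\) comes from the two elementary bounds \(\sum_{n\geq2}z^n/n\leq z^2/(2(1-z))\) for \(0\leq z<1\) and \(-\log(1-z)-z\leq z^2/2\) for \(z\leq0\), summed over the Gaussian and ray contributions using \(1-sw_j\geq1-sw^*\).
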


\begin{proof}
The gamma MGF gives \eqref{eq:gamma-cumulant} and \eqref{eq:gamma-domain}. Put
\[
 a_j=\beta_j\ip{t}{v_j},
 \qquad c=\max_j(a_j)_+.
\]
For \(0\leq z<1\),
\begin{equation}\label{eq:gamma-positive-bound}
 -\log(1-z)-z=\sum_{n\geq2}\frac{z^n}{n}
 \leq\frac{z^2}{2(1-z)}.
\end{equation}
For \(z\leq0\),
\begin{equation}\label{eq:gamma-negative-bound}
 -\log(1-z)-z\leq\frac{z^2}{2};
\end{equation}
the difference between the right and left sides has derivative
\(-z^2/(1-z)\) and vanishes at zero. Therefore, for \(0\leq s<1/c\), every positive \(a_j\) is bounded using \eqref{eq:gamma-positive-bound} and \(1-sa_j\geq1-sc\), while every nonpositive \(a_j\) is bounded using \eqref{eq:gamma-negative-bound}. The Gaussian term is also dominated after inserting the denominator. Since
\[
 V_X(t)=t^\top A t+\sum_{j=1}^m\alpha_ja_j^2,
\]
we obtain
\[
 K_X(st)\leq\frac{V_X(t)s^2}{2(1-cs)}.
\]
Thus \(C_X(t)\leq c\).

The intersection of the \(m\) half-spaces in \eqref{eq:gamma-domain} has gauge
\[
 p_{D_X}(t)=\max_j(a_j)_+=c.
\]
The lower bound in Theorem~\ref{thm:sandwich} gives \(C_X(t)\geq c\), proving \eqref{eq:gamma-pole}. Finally, the maximum of the linear functionals \(\ip{t}{\beta_jv_j}\) and zero is the support function of the convex hull in \eqref{eq:gamma-polytope}.
\end{proof}

\begin{remark}\label{rem:gamma-insight}
The shapes \(\alpha_j\) and the Gaussian covariance affect the true variance in the numerator but not the pole. The pole is set by the first positive gamma singularity along each ray. The support function in \eqref{eq:gamma-polytope} belongs to the finite rate-vector polytope; it is not the support function of the unbounded L\'evy jump support.
\end{remark}

For \(m=1\), Theorem~\ref{thm:gamma-rays} recovers the familiar gamma scale in the positive direction and zero in every direction that sees only negative jumps. With several noncollinear rays, it produces a piecewise-linear, generally asymmetric pole on \(\R^d\). This is a case where the radial minimum is a convex gauge, in contrast with Proposition~\ref{prop:non-subadditive}.

\section{Discussion}\label{sec:discussion}

Fixing the quadratic proxy at \(t^\top\Sigma t\) separates local variance from the global obstruction to exponential concentration. For infinitely divisible vectors, that obstruction is directional. The function \(C_X\) records it without replacing the covariance by an isotropic upper bound. Corollary~\ref{cor:euclidean} recovers the best one-constant denominator when such a summary is needed, while the full function retains directions with a zero pole, a finite pole, or no positive exponential moment.

The two geometric bounds play different roles. The domain gauge is unavoidable: any Bernstein denominator must place its pole no farther than the MGF boundary. The positive jump radius gives a universal sufficient scale when projected positive jumps are bounded. Neither bound characterizes the optimum in every law, and the two-atom example shows why \(C_X\) cannot generally be treated as a norm or convex body gauge. Gamma rays supply a complementary exact case: the domain bound is attained and can be written as the support function of a finite polytope even though the underlying positive jump support is unbounded.

The regularity theorem isolates the assumption that makes the directional optimization stable. Global exponential moments turn the variational problem into a uniform limit of compact maximizations. Finite variance only fixes the second-order expansion and cannot prevent the positive MGF domain from collapsing under an arbitrarily small rotation. This distinction matters whenever a global vector constant is formed by taking the supremum over directions.

Statistical estimation of \(C_X\) is a natural next question. The canonical measure already has an inference literature \citep{watteel2003}, but a direction-indexed pole raises uniformity and identifiability problems, especially when infinite values are possible. A second question is to find structural conditions, between bounded positive jumps and finite gamma rays, that force subadditivity. The domain sandwich and the two counterexamples delimit both problems.

\begin{acknowledgement}[title={Declarations}]
\textbf{Funding.} The authors received no specific funding for this work.

\textbf{Competing interests.} The authors declare no competing interests.

\textbf{Data availability.} No datasets were generated or analyzed. All examples are analytic.

\textbf{Author contributions.} Y.C. developed the theory, proofs, counterexamples, and initial manuscript. X.W. supervised the project and reviewed the mathematical argument and manuscript.

\textbf{Related manuscripts.} Two unpublished manuscripts by the authors treat the preceding scalar cases. The present article is self-contained and separates its multivariate results in the Introduction.
\end{acknowledgement}

\bibliographystyle{bib/vmsta2-mathphys}
\bibliography{bib/references}

\end{document}